\newtheorem{theorem}{Theorem}[section]
\newtheorem{lemma}[theorem]{Lemma}
\newtheorem{corollary}[theorem]{Corollary}
\theoremstyle{definition}
\newtheorem{definition}[theorem]{Definition}
\newtheorem{criterion}[]{Criterion}
\newtheoremstyle{named}{}{}{\itshape}{}{\bfseries}{.}{.5em}{\thmnote{#3's }#1} \theoremstyle{named} 
\theoremstyle{remark}
\numberwithin{equation}{section}
\numberwithin{equation}{section}
\title{On the minimum distance between masses of relative equilibria of the $n$-body problem}
\author{Pieter Tibboel$^\ast$}
\begin{document}
\maketitle
\begin{abstract}
  We prove that if for relative equilibrium solutions of a generalisation of the $n$-body problem of celestial mechanics the masses and rotation are given, then the minimum distance between the point masses of such a relative equilibrium has a universal lower bound that is not equal to zero. We furthermore prove that the set of such relative equilibria is compact.
\end{abstract}

\begin{description}

\item \hspace*{3.8mm}$\ast$ Department of Mathematics, City University of
Hong Kong, Hong Kong. \\
Email: \texttt{ptibboel@cityu.edu.hk}

\end{description}

\newpage
\section{Introduction}

By the $n$-body problem we mean the problem of deducing the dynamics of $n$ point masses with time dependent coordinates $q_{1}$,..., $q_{n}\in\mathbb{R}^{k}$, $k\geq 2$ and respective masses $m_{1}$,...,$m_{n}$ as described by the system of differential equations \begin{align}\label{Equations of motion}
  \ddot{q}_{i}=\sum\limits_{j=1,\textrm{ }j\neq i}^{n}m_{j}(q_{j}-q_{i})\|q_{j}-q_{i}\|^{2a},\textrm{ }n\geq 3,\textrm{ }a<-\frac{1}{2}.
\end{align}
If $a=-\frac{3}{2}$ and $k=3$, then we speak of the \textit{classical $n$-body problem}.
We call any solution to such a problem where the $q_{1}$,..., $q_{n}$ describe a rotating configuration of points a \textit{relative equilibrium} and the set of all such configurations that are equivalent under rotation and scalar multiplication a \textit{class of relative equilibria}.

Steve Smale conjectured on his famous list (see \cite{Smale}) after Wintner (see \cite{Wintner}) that for the classical case, if the equilibria are induced by a plane rotation, the number of classes of relative equilibria is finite, if the masses $m_{1}$,...,$m_{n}$ are given. This problem is still open for $n>5$ and was solved for $n=3$ by A. Wintner (see \cite{Wintner}), $n=4$ by M. Hampton and R. Moeckel (see \cite{HamptonMoeckel}) and for $n=5$ by A. Albouy and V. Kaloshin (see \cite{AlbouyKaloshin}). \\
Results on the finiteness of subclasses of relative equilibria can be found in \cite{Kuzmina}, \cite{Llibre}, \cite{Moulton} and \cite{Palmore}.
G. Roberts showed in \cite{Roberts} that for the classical five-body problem, if one of the masses is negative, a continuum of relative equilibria exists. As a potential step towards a proof of Smale's problem, M. Shub showed in \cite{S} that the set of all classes of relative equilibria, provided they have the same set of masses, is compact. Moreover, Shub proved, again in \cite{S}, that if the rotation inducing the equilibria is given as well, then there exists a universal nonzero, minimal distance that the point masses lie apart from each other. For further background information and a more detailed overview regarding Smale's problem, see \cite{AbrahamMarsden},  \cite{HamptonMoeckel}, \cite{Smale2} and \cite{Roberts} and the references therein. \\
In this paper, as a logical next step after Shub's work in \cite{S}, we prove Shub's results when using (\ref{Equations of motion}) instead of the classical $n$-body problem. Specifically, we prove that
\begin{theorem}\label{limits distinct}
  Consider the set $R_{A,m_{1},...,m_{n}}$ of all relative equilibria with rotation matrix  $T_{k}(\overrightarrow{A}t)$ and masses $m_{1}$,..., $m_{n}$ (see Definition~\ref{Definition Relative Equilibrium}). Then there exists a constant $c\in\mathbb{R}_{>0}$ such that for all relative equilibria $\{T_{k}(\overrightarrow{A}t)Q_{i}\}_{i=1}^{n}$ in the set $R_{A,m_{1},...,m_{n}}$, we have that $\|Q_{i}-Q_{j}\|>c$ for all $i$, $j\in\{1,...,n\}$, $i\neq j$.
\end{theorem}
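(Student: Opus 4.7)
The plan is to argue by contradiction and perform a blow--up analysis at the purported collision. Substituting $q_{i}(t)=T_{k}(\overrightarrow{A}t)Q_{i}$ into~(\ref{Equations of motion}) and cancelling the common rotation gives an algebraic identity of the form
\[
\Omega^{2}Q_{i}=\sum_{j\neq i}m_{j}(Q_{j}-Q_{i})\|Q_{j}-Q_{i}\|^{2a},\qquad i=1,\ldots,n,
\]
where $\Omega$ denotes the (antisymmetric) infinitesimal generator attached to $T_{k}(\overrightarrow{A}t)$. Suppose, for contradiction, that there is a sequence $\{Q^{(\nu)}\}_{\nu\geq 1}\subset R_{A,m_{1},\ldots,m_{n}}$ with $d_{\nu}:=\min_{i\neq j}\|Q_{i}^{(\nu)}-Q_{j}^{(\nu)}\|\to 0$.

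My first task is to confine $\{Q^{(\nu)}\}$ to a bounded subset of $\mathbb R^{nk}$. If instead $M_{\nu}:=\max_{i}\|Q_{i}^{(\nu)}\|\to\infty$, rescale $\hat Q_{i}^{(\nu)}:=Q_{i}^{(\nu)}/M_{\nu}$ to obtain
\[
\Omega^{2}\hat Q_{i}^{(\nu)}=M_{\nu}^{\,2a}\sum_{j\neq i}m_{j}(\hat Q_{j}^{(\nu)}-\hat Q_{i}^{(\nu)})\|\hat Q_{j}^{(\nu)}-\hat Q_{i}^{(\nu)}\|^{2a},
\]
whose right--hand side vanishes in the limit because $2a<-1$. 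Any subsequential limit $\hat Q^{\ast}$ with $\max_{i}\|\hat Q_{i}^{\ast}\|=1$ would then satisfy $\Omega^{2}\hat Q_{i}^{\ast}=0$, forcing every $\hat Q_{i}^{\ast}$ into $\ker\Omega$; the non--degeneracy of $T_{k}(\overrightarrow{A}t)$ contradicts this, so the configurations stay bounded.

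Passing to a subsequence I may assume $Q^{(\nu)}\to Q^{\ast}$ with at least two coordinates of $Q^{\ast}$ coinciding. I choose a maximal cluster $C\subset\{1,\ldots,n\}$, $|C|\geq 2$, whose members collapse together at the rate $d_{\nu}$, and rescale $\tilde Q_{i}^{(\nu)}:=(Q_{i}^{(\nu)}-Q_{i_{0}}^{(\nu)})/d_{\nu}$ for a fixed $i_{0}\in C$. Along a further subsequence, $\tilde Q_{i}^{(\nu)}\to\tilde Q_{i}^{\ast}$ with $\min_{i\neq j\in C}\|\tilde Q_{i}^{\ast}-\tilde Q_{j}^{\ast}\|=1$. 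Splitting the equation for $i\in C$ into intra-- and extra--cluster pieces, the left--hand side $\Omega^{2}Q_{i}^{(\nu)}$ and the extra--cluster sum remain $O(1)$, while the intra--cluster contribution equals $d_{\nu}^{\,2a+1}\sum_{j\in C\setminus\{i\}}m_{j}(\tilde Q_{j}^{(\nu)}-\tilde Q_{i}^{(\nu)})\|\tilde Q_{j}^{(\nu)}-\tilde Q_{i}^{(\nu)}\|^{2a}$ with diverging prefactor $d_{\nu}^{\,2a+1}\to\infty$. Boundedness of the other two terms therefore forces the bracketed sum to vanish in the limit, yielding
\[
\sum_{j\in C\setminus\{i\}}m_{j}(\tilde Q_{j}^{\ast}-\tilde Q_{i}^{\ast})\|\tilde Q_{j}^{\ast}-\tilde Q_{i}^{\ast}\|^{2a}=0\qquad (i\in C).
\]

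The contradiction is now purely geometric: a bounded configuration of $\geq 2$ points with positive masses cannot be in fully attractive force equilibrium. Translating so that the centre of mass of $C$ sits at the origin and choosing $\ell\in C$ with $\|\tilde Q_{\ell}^{\ast}\|$ maximal, the inner product of the $i=\ell$ identity with $\tilde Q_{\ell}^{\ast}$ expresses $0$ as a sum of non--positive terms $m_{j}(\langle\tilde Q_{j}^{\ast},\tilde Q_{\ell}^{\ast}\rangle-\|\tilde Q_{\ell}^{\ast}\|^{2})\|\tilde Q_{j}^{\ast}-\tilde Q_{\ell}^{\ast}\|^{2a}$, each vanishing only when $\tilde Q_{j}^{\ast}=\tilde Q_{\ell}^{\ast}$; hence all members of $C$ coincide, contradicting the normalisation. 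The delicate point I anticipate is the cluster identification in paragraph three: if several pairs collide at different rates, one may need an iterated, McGehee--type multi--scale blow--up to single out a cluster whose internal distances are genuinely comparable to the blow--up scale and whose internal forces dominate every external contribution.
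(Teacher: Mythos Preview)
Your blow--up strategy is sound in spirit, but the argument as written is circular at the boundedness step. You rescale by $M_{\nu}=\max_{i}\|Q_{i}^{(\nu)}\|$ and assert that the right--hand side
\[
M_{\nu}^{\,2a}\sum_{j\neq i}m_{j}(\hat Q_{j}^{(\nu)}-\hat Q_{i}^{(\nu)})\|\hat Q_{j}^{(\nu)}-\hat Q_{i}^{(\nu)}\|^{2a}
\]
tends to zero because $M_{\nu}^{\,2a}\to 0$. But under the very hypothesis you are trying to contradict, some rescaled distances $\|\hat Q_{j}^{(\nu)}-\hat Q_{i}^{(\nu)}\|$ may themselves tend to zero, and since the exponent $2a$ is negative the corresponding summands can blow up faster than $M_{\nu}^{\,2a}$ decays; the product need not vanish. (For odd $k$ there is the additional wrinkle that $\ker\Omega$ is one--dimensional, so ``non--degeneracy'' does not immediately yield a contradiction either.) In the paper the logical order is reversed: boundedness is the \emph{Corollary}, deduced \emph{from} the minimum--distance theorem, precisely because one cannot get it first without already controlling collisions.

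This gap propagates: in your blow--up paragraph you need $\Omega^{2}Q_{i}^{(\nu)}=O(1)$ for $i\in C$, which is exactly boundedness of $Q_{i}^{(\nu)}$. The clean repair is to subtract equations within the cluster and work with $\Omega^{2}(Q_{i}^{(\nu)}-Q_{i_{0}}^{(\nu)})$, which is $O(d_{\nu})$ and hence $o(d_{\nu}^{\,2a+1})$ without any a priori bound on the positions. That is essentially what the paper does: Lemma~\ref{Lemma Winning Identity} packages the mass--weighted differences so that the left side involves only $Q_{1}-Q_{i}$ (hence tends to zero) and the ``extra--cluster'' remainder $R_{1l}-R_{il}$ is automatically bounded because the distances to indices $j>l$ do not tend to zero by the choice of cluster. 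The paper then projects onto the direction of the \emph{longest} intra--cluster edge $Q_{1}-Q_{l}$; since every $Q_{1}-Q_{j}$ makes an angle at most $\pi/2$ with it, all contributions have the same sign and at least one diverges. Your extremal--point inner--product argument at the end is a legitimate alternative to this last step, but it only becomes available once the force--balance limit is correctly established via differences rather than via the unjustified boundedness claim.
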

and consequently that
\begin{corollary}\label{compact}
  Consider the set $R_{A,m_{1},...,m_{n}}$ of all relative equilibria with rotation matrix  $T_{k}(\overrightarrow{A}t)$ and masses $m_{1}$,..., $m_{n}$ (see Definition~\ref{Definition Relative Equilibrium}). Then there exists a $C\in\mathbb{R}_{>0}$ such that for all relative equilibria $\{T_{k}(\overrightarrow{A}t)Q_{i}\}_{i=1}^{n}$ in the set $R_{A,m_{1},...,m_{n}}$, we have that $\|Q_{i}\|<C$ for all $i\in\{1,...,n\}$.
\end{corollary}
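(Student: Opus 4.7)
The plan is to derive the corollary directly from Theorem~\ref{limits distinct} by exploiting the algebraic constraint that the relative-equilibrium condition places on the configuration $\{Q_i\}_{i=1}^n$. Substituting $q_i(t)=T_k(\vec A t)Q_i$ into~(\ref{Equations of motion}) and differentiating twice in $t$, the rotation matrix $T_k(\vec A t)$ factors out from both sides and leaves the time-independent algebraic system
\begin{align*}
  \Lambda Q_i = \sum_{j\neq i} m_j\,(Q_j-Q_i)\,\|Q_j-Q_i\|^{2a},\qquad i=1,\dots,n,
\end{align*}
where $\Lambda$ is the constant (symmetric, negative-semidefinite) matrix obtained from the square of the infinitesimal generator of $t\mapsto T_k(\vec A t)$. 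In particular $\Lambda$ depends only on $\vec A$ and is the same for every element of $R_{A,m_1,\dots,m_n}$.

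The key observation is that the right-hand side of this system is already uniformly bounded, as an immediate consequence of Theorem~\ref{limits distinct}. Indeed, each interaction vector satisfies
\begin{align*}
  \bigl\|(Q_j-Q_i)\,\|Q_j-Q_i\|^{2a}\bigr\| = \|Q_j-Q_i\|^{2a+1},
\end{align*}
and since $a<-1/2$ the exponent $2a+1$ is negative, so the lower bound $\|Q_j-Q_i\|>c$ from Theorem~\ref{limits distinct} gives $\|Q_j-Q_i\|^{2a+1}<c^{2a+1}$. Summing yields $\|\Lambda Q_i\|\le c^{2a+1}\sum_{j\neq i} m_j$, a bound depending only on $c$, $a$, and the prescribed masses.

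If $\Lambda$ is invertible on the ambient space, the corollary then follows at once with $C=\|\Lambda^{-1}\|\,c^{2a+1}\sum_j m_j$. The main obstacle I expect is the possible degeneracy of $\Lambda$: whenever the ambient dimension $k$ exceeds the dimension of the effective plane(s) of rotation, $\Lambda$ acquires a nontrivial kernel on which the force balance gives no information, as happens already for a classical rotation in $\mathbb R^3$. To handle this, I would decompose $Q_i$ into its projections onto $(\ker\Lambda)^{\perp}$ and $\ker\Lambda$; the first component is bounded by the argument above, and the second must be controlled using the normalization built into the definition of $R_{A,m_1,\dots,m_n}$—typically by placing the center of mass at the origin and noting that translations along $\ker\Lambda$ act trivially on the class of the relative equilibrium, so that the corresponding components can be assumed to vanish (or at worst to lie in a prescribed bounded set). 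This reduction to the nondegenerate case then produces the desired universal constant $C$.
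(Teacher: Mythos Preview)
Your argument is essentially the paper's: both combine Criterion~\ref{Criterion} with Theorem~\ref{limits distinct} to bound the force side of the equilibrium equation, and then read off a bound on $Q_i$. The paper merely phrases this by contradiction---if $\|Q_{1r}\|\to\infty$ then $\|\mathbf{A}^{2}Q_{1r}\|\to\infty$, so some term on the right has norm $m_j\|Q_{1r}-Q_{jr}\|^{2a+1}\to\infty$, forcing $\|Q_{1r}-Q_{jr}\|\to 0$ and contradicting Theorem~\ref{limits distinct}---whereas you argue directly via $\|\mathbf{A}^{2}Q_i\|\le c^{2a+1}\sum_{j\neq i}m_j$.

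You are in fact more careful than the paper on one point. The paper's assertion ``the left-hand side is unbounded'' tacitly assumes $\mathbf{A}^{2}$ is injective, which fails when $k=2p+1$; you correctly flag this as the kernel obstacle. Your proposed remedy via a translation normalization along $\ker\mathbf{A}^{2}$ is the right idea, but be aware that no such normalization is actually built into Definition~\ref{Definition Relative Equilibrium}: for odd $k$ one can translate any relative equilibrium along the last coordinate axis and remain in $R_{A,m_1,\dots,m_n}$, so a convention of this kind (e.g.\ fixing the centre of mass in the kernel direction) is genuinely required for the statement to hold as written.
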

In order to prove Theorem~\ref{limits distinct} and Corollary~\ref{compact}, we will first formulate needed definitions, a criterion for relative equilibria and a lemma related to relative equilibria, which will be done in section~\ref{Background Theory}. Then we will prove Theorem~\ref{limits distinct} in section~\ref{Proof of the Main Theorem} and Corollary~\ref{compact} in section~\ref{Proof of the Main Corollary}. %This paper is inspired by \cite{D2} and \cite{S}.

\section{Background Theory}\label{Background Theory}
Before getting to the lemma that will form the backbone of our theorems, we will have to formulate a criterion, for which we will have to adopt the following definition:\\
  Let
  \begin{align*}
    T(t)=\begin{pmatrix}
      \cos{t} &-\sin{t} \\
      \sin{t} & \cos{t}
    \end{pmatrix}
  \end{align*}
  and define for any $p$-dimensional vector-valued function \begin{align*}\overrightarrow{\theta}=\begin{pmatrix}\theta_{1}\\ \vdots\\ \theta_{p}\end{pmatrix}
  \end{align*}
  a $k\times k$ diagonal block matrix $T_{k}(\overrightarrow{\theta})$ as
  \begin{align*}
    T_{k}(\overrightarrow{\theta})=\begin{cases}
      \begin{pmatrix}
        T(\theta_{1}) & \hdots & 0 \\
        \vdots & \ddots & \vdots \\
        0 & \hdots & T(\theta_{p})
      \end{pmatrix}\textrm{ if }k=2p, \\
      \begin{pmatrix}
        T(\theta_{1}) & \hdots & 0 & 0\\
        \vdots & \ddots & \vdots &\vdots \\
        0 & \hdots & T(\theta_{p}) & 0 \\
        0 & \hdots & 0 & 1
      \end{pmatrix}\textrm{ if }k=2p+1 \\
    \end{cases}.
  \end{align*}
  Then
\begin{definition}\label{Definition Relative Equilibrium}
  Let $n\in\mathbb{N}$, let $q_{i}$, $i\in\{1,...,n\}$ solve (\ref{Equations of motion}), let \begin{align*}
    \overrightarrow{A}=\begin{pmatrix}A_{1}\\ \vdots \\A_{p}\end{pmatrix}\in\mathbb{R}_{>0}^{p},
  \end{align*} $Q_{1}$,...,$Q_{n}\in\mathbb{R}^{k}$ and let $q_{i}(t)=T_{k}(\overrightarrow{A}t)Q_{i}$, $i=1,...,n$. Then we say that $q_{1}$,..., $q_{n}$ form a \textit{relative equilibrium} with rotation matrix $T_{k}(\overrightarrow{A}t)$.
\end{definition}
Inserting the expressions for $q_{1}$,..., $q_{n}$ as described in Definition~\ref{Definition Relative Equilibrium} into (\ref{Equations of motion}) and using that for any $x\in\mathbb{R}^{k}$, $\|T_{k}(\overrightarrow{A}t)x\|=\|x\|$ and that $(T_{k}(\overrightarrow{A}t))''=-\mathbf{A}^{2}T_{k}(\overrightarrow{A}t)$ where $\mathbf{A}$ is the diagonal matrix
 \begin{align*}
   \mathbf{A}=\begin{cases}
     \begin{pmatrix}
       A_{1} & 0  & \hdots & 0 & 0\\
       0 & A_{1} & \ddots & \vdots & \vdots \\
       \vdots & \ddots & \ddots & 0 & 0\\
       0 & \hdots & 0 & A_{p} & 0\\
       0 & \hdots & 0 & 0 & A_{p}
     \end{pmatrix}\textrm{ if }k=2p\\
     \begin{pmatrix}
       A_{1} & 0  & \hdots & 0 & 0 & 0\\
       0 & A_{1} & \ddots & \vdots & \vdots& \vdots \\
       \vdots & \ddots & \ddots & 0 & 0 & 0\\
       0 & \hdots & 0 & A_{p} & 0 & 0 \\
       0 & \hdots & 0 & 0 & A_{p} & 0 \\
       0 & \hdots & 0 & 0 & 0 & 0
     \end{pmatrix}\textrm{ if }k=2p+1
   \end{cases}
 \end{align*} we get
\begin{criterion}\label{Criterion}
  For any relative equilibrium solution $\{T_{k}(\overrightarrow{A}t)Q_{i}\}_{i=1}^{n}$ of (\ref{Equations of motion}) as described in Definition~\ref{Definition Relative Equilibrium}, we have that
  \begin{align*}
    \mathbf{A}^{2}Q_{i}=\sum\limits_{j=1,\textrm{ }j\neq i}^{n}m_{j}(Q_{i}-Q_{j})\|Q_{i}-Q_{j}\|^{2a},\textrm{ }i\in\{1,...,n\}.
  \end{align*}
\end{criterion}
Before being able to prove Theorem~\ref{limits distinct} and Corollary~\ref{compact}, we will need the following lemma:
\begin{lemma}\label{Lemma Winning Identity}
  Let $q_{1}=T_{k}(\overrightarrow{A}t)Q_{1},...,q_{n}=T_{k}(\overrightarrow{A}t)Q_{n}$ be a relative equilibrium according to Definition~\ref{Definition Relative Equilibrium}. If for $i$, $l\in\{1,...,n\}$ we write
  \begin{align*}R_{il}=\sum\limits_{j=l+1}^{n}m_{j}(Q_{i}-Q_{j})\|Q_{i}-Q_{j}\|^{2a},\end{align*}
  then
  \begin{align}\label{bla1}
    \mathbf{A}^{2}\sum\limits_{i=2}^{l}m_{i}(Q_{1}-Q_{i})=\left(\sum\limits_{i=1}^{l}m_{i}\right)\sum\limits_{j=2}^{l}m_{j}(Q_{1}-Q_{j})\|Q_{1}-Q_{j}\|^{2a}+\sum\limits_{i=2}^{l}m_{i}(R_{1l}-R_{il}),
  \end{align}
\end{lemma}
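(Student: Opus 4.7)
The plan is to derive the identity directly from Criterion~\ref{Criterion}, by forming an appropriate weighted combination over $i$ of the equations for $Q_1$ and $Q_i$ and then exploiting the antisymmetry of $(Q_i-Q_j)\|Q_i-Q_j\|^{2a}$ under $i\leftrightarrow j$.

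First I would apply Criterion~\ref{Criterion} to $Q_1$ and to $Q_i$ separately and subtract, writing
\[
\mathbf{A}^2(Q_1-Q_i)=\sum_{j\ne 1}m_j(Q_1-Q_j)\|Q_1-Q_j\|^{2a}-\sum_{j\ne i}m_j(Q_i-Q_j)\|Q_i-Q_j\|^{2a}.
\]
Then I would split each sum into a ``head'' over indices $\le l$ and a ``tail'' over indices $\ge l+1$, so that the tails are exactly $R_{1l}$ and $R_{il}$. Multiplying by $m_i$ and summing $i=2,\ldots,l$ gives the left-hand side of (\ref{bla1}) on the left, and on the right three contributions: (a) $\bigl(\sum_{i=2}^{l}m_i\bigr)\sum_{j=2}^{l}m_j(Q_1-Q_j)\|Q_1-Q_j\|^{2a}$ from the head of the first sum, which is $i$-independent; (b) the desired tail contribution $\sum_{i=2}^{l}m_i(R_{1l}-R_{il})$; and (c) a double-sum ``head'' term from the second sum that must still be simplified.

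The crux is to show that (c) contributes exactly $m_1\sum_{j=2}^{l}m_j(Q_1-Q_j)\|Q_1-Q_j\|^{2a}$, so that together with (a) one obtains the required coefficient $\sum_{i=1}^{l}m_i$. Writing (c) as
\[
-\sum_{i=2}^{l}\sum_{\substack{j=1\\ j\ne i}}^{l} m_i m_j(Q_i-Q_j)\|Q_i-Q_j\|^{2a},
\]
I would split off the $j=1$ term. The remaining sum ranges over $i,j\in\{2,\ldots,l\}$ with $i\ne j$, and since $m_im_j(Q_i-Q_j)\|Q_i-Q_j\|^{2a}$ is antisymmetric in $(i,j)$, it vanishes. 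The $j=1$ piece equals $m_1\sum_{i=2}^{l}m_i(Q_1-Q_i)\|Q_1-Q_i\|^{2a}$ (after a sign flip), which combines with (a) to produce the coefficient $m_1+\sum_{i=2}^{l}m_i=\sum_{i=1}^{l}m_i$.

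No genuine analytic obstacle arises here; the entire argument is algebraic. The only ``hard'' step is the bookkeeping associated with the head/tail decomposition and correctly isolating the antisymmetric double sum that cancels, which is what converts an ostensibly unwieldy expression into the clean form (\ref{bla1}).
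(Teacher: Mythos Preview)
Your proposal is correct and follows essentially the same route as the paper: subtract the Criterion~\ref{Criterion} equations for $Q_1$ and $Q_i$, split off the tails $R_{1l}$, $R_{il}$, multiply by $m_i$ and sum, then isolate the $j=1$ term and kill the remaining double sum over $i,j\in\{2,\ldots,l\}$ by antisymmetry. The only cosmetic difference is that the paper extracts the $j=1$ term before multiplying by $m_i$ and summing, whereas you do it afterwards.
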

\begin{proof}
  For $i\in\{2,...,l\}$, using Criterion~\ref{Criterion}, we get
  \begin{align}
    \mathbf{A}^{2}(Q_{1}-Q_{i})&=\sum\limits_{j=2}^{l}m_{j}(Q_{1}-Q_{j})\|Q_{1}-Q_{j}\|^{2a}-\sum\limits_{j=1,\textrm{ }j\neq i}^{l}m_{j}(Q_{i}-Q_{j})\|Q_{i}-Q_{j}\|^{2a}+R_{1l}-R_{il}\nonumber \\
    &=\sum\limits_{j=2}^{l}m_{j}(Q_{1}-Q_{j})\|Q_{1}-Q_{j}\|^{2a}-\sum\limits_{j=2,\textrm{ }j\neq i}^{l}m_{j}(Q_{i}-Q_{j})\|Q_{i}-Q_{j}\|^{2a}\nonumber\\
    &+m_{1}(Q_{1}-Q_{i})\|Q_{1}-Q_{i}\|^{2a}+R_{1l}-R_{il}.\label{bla}
  \end{align}
  Note that
  \begin{align*}
    \sum\limits_{i=2}^{l}\sum\limits_{j=2,\textrm{ }j\neq i}^{l}m_{i}m_{j}(Q_{i}-Q_{j})\|Q_{i}-Q_{j}\|^{2a}=0,
  \end{align*}
  so multiplying both sides of (\ref{bla}) with $m_{i}$ and then summing both sides over $i$ from $2$ to $l$ gives
  \begin{align}
    \mathbf{A}^{2}\sum\limits_{i=2}^{l}m_{i}(Q_{1}-Q_{i})&=\left(\sum\limits_{i=2}^{l}m_{i}\right)\sum\limits_{j=2}^{l}m_{j}(Q_{1}-Q_{j})\|Q_{1}-Q_{j}\|^{2a}-0\\
    &+\sum\limits_{i=2}^{l}m_{1}m_{i}(Q_{1}-Q_{i})\|Q_{1}-Q_{i}\|^{2a}+\sum\limits_{i=2}^{l}m_{i}(R_{1l}-R_{il})\nonumber\\
    &=\left(\sum\limits_{i=1}^{l}m_{i}\right)\sum\limits_{j=2}^{l}m_{j}(Q_{1}-Q_{j})\|Q_{1}-Q_{j}\|^{2a}+\sum\limits_{i=2}^{l}m_{i}(R_{1l}-R_{il}).\label{limits distinct 2}
  \end{align}
\end{proof}
We now have all that is needed to prove our main theorem.
\section{Proof of Theorem~\ref{limits distinct}}\label{Proof of the Main Theorem}
\begin{proof}
  Assume that the contrary is true. Then there exist sequences $\{Q_{ir}\}_{r=1}^{\infty}$ and relative equilibria $q_{ir}(t)=T_{k}(\overrightarrow{A}t)Q_{ir}$, $i\in\{1,...,n\}$ for which we may assume, if we renumber the $Q_{ir}$ in terms of $i$ and take subsequences if necessary, the following:
  \begin{itemize}
    \item[1. ] There exist sequences $\{Q_{1r}\}_{r=1}^{\infty}$,...,$\{Q_{lr}\}_{r=1}^{\infty}$, $l\leq n$ such that $\|Q_{ir}-Q_{jr}\|$ goes to zero for $r$ going to infinity if $i$, $j\in\{1,...,l\}$.
    \item[2. ] $\|Q_{ir}-Q_{jr}\|$ does not go to zero for $r$ going to infinity if $i\in\{1,...,l\}$ and $j\in\{l+1,...,n\}$.
    \item[3. ] $Q_{1r}$,...,$Q_{lr}$ do not go to zero, as any solution of (\ref{bla1}) is determined up to rotation and translation, so by translating  $Q_{1r}$,...,$Q_{lr}$ if necessary, we may assume that $Q_{1r}$,...,$Q_{lr}$ do not go to zero.
    \item[4. ] $\|Q_{1r}-Q_{lr}\|\geq\|Q_{ir}-Q_{jr}\|$ for all $i$, $j\in\{1,...,l\}$, for all $r\in\mathbb{N}$.
  \end{itemize}
  Note that for any $i\in\{1,...,l\}$ the vectors $Q_{1r}-Q_{lr}$, $Q_{1r}-Q_{ir}$ and $Q_{ir}-Q_{lr}$ either form a triangle with $\|Q_{1r}-Q_{lr}\|$ the length of its longest side, or the three of them align, meaning the angles between them are zero. Consequently, the angle between $Q_{1r}-Q_{lr}$ and $Q_{1r}-Q_{ir}$ is smaller than $\frac{\pi}{2}$. Let $\beta_{i1lr}$ be the angle between $Q_{1r}-Q_{lr}$ and $Q_{1r}-Q_{ir}$. If there are $i$ such that $\lim\limits_{r\rightarrow\infty}\beta_{i1lr}<\frac{1}{2}\pi$, then taking inner products on both sides of (\ref{limits distinct 2}) with $\frac{Q_{1r}-Q_{lr}}{\|Q_{1r}-Q_{lr}\|}$ and then letting $r$ go to infinity gives a contradiction. As $\lim\limits_{r\rightarrow\infty}\beta_{l1lr}=0$, there is at least one such an $i$. This completes the proof.
\end{proof}
\section{Proof of Corollary~\ref{compact}}~\label{Proof of the Main Corollary}
\begin{proof}
  Assume the contrary to be true. Then there exist sequences $\{Q_{ir}\}_{r=1}^{\infty}$, $i\in\{1,...,n\}$ for which $q_{ir}(t)=T_{k}(\overrightarrow{A}t)Q_{ir}$ define relative equilibrium solutions of (\ref{Equations of motion}) and for which there has to be at least one sequence $\{Q_{ir}\}_{r=1}^{\infty}$ that is unbounded. Taking subsequences and renumbering the $Q_{ir}$ in terms of $i$ if necessary, we may assume that $\{Q_{1r}\}_{r=1}^{\infty}$ is unbounded. By Criterion~\ref{Criterion},
  \begin{align}\label{Compactness k}
    \mathbf{A}^{2}Q_{1r}=\sum\limits_{j=2}^{n}m_{j}(Q_{1r}-Q_{jr})\|Q_{1r}-Q_{jr}\|^{2a}.
  \end{align}
  As the left-hand side of (\ref{Compactness k}) is unbounded, the right-hand side must be unbounded as well, which means that there must be $j\in\{2,...,n\}$ for which \begin{align*}m_{j}(Q_{1r}-Q_{jr})\|Q_{1r}-Q_{jr}\|^{2a}\end{align*} is unbounded if we let $r$ go to infinity. But as \begin{align*}\left\|m_{j}(Q_{1r}-Q_{jr})\|Q_{1r}-Q_{jr}\|^{2a}\right\|=m_{j}\|Q_{1r}-Q_{jr}\|^{2a+1},\end{align*} that means that $\|Q_{1r}-Q_{jr}\|$ goes to zero for $r$ going to infinity, which is impossible by Theorem~\ref{limits distinct}. This completes the proof.
\end{proof}

\end{document}